\subjclass[2010]{30L05,37F35}
\keywords{Bi-Lipschitz embeddings, Hausdorff dimension, Assouad dimension, ultrametrics}
\theoremstyle{plain}
  \newtheorem{lemma}{Lemma}
  \newtheorem{theorem}[lemma]{Theorem}
  \newtheorem{claim}[lemma]{Claim}
  \newtheorem{proposition}[lemma]{Proposition}
 \theoremstyle{definition}
  \newtheorem{remark}[lemma]{Remark}
\newcommand{\e}{\varepsilon}
\DeclareMathOperator{\diam}{diam}
\newcommand{\vol}[1]{\left | #1 \right|}
\begin{document}

\title{On the Hausdorff dimension\\ of ultrametric subsets in~$\mathbb R^n$}

\author{James R. Lee}
\address{Computer Science \& Engineering, University of Washington} \email{jrl@cs.washington.edu}
\author{Manor Mendel}
\address{Mathematics and Computer Science, Open University of Israel} \email{mendelma@gmail.com}
\author{Mohammad Moharrami}
\address{Computer Science \& Engineering, University of Washington} \email{mohammad@cs.washington.edu}

\begin{abstract} For every $\e>0$, any subset of $\mathbb{R}^n$ with Hausdorff dimension larger than $(1-\e)n$ must have ultrametric distortion larger than $1/(4\e)$.
\end{abstract}

\maketitle

\noindent
We prove the following theorem.
\begin{theorem}
\label{thm:Rd-subsets}
For every $D>1$, every $n\in\mathbb N$, and every norm $\|\cdot\|$ on $\mathbb R^n$, any subset $S\subset \mathbb R^n$  having ultrametric distortion at most $D$,
must have Hausdorff dimension at most $\bigl(1-\frac{1}{2(D+1)}\bigr)n$.
\end{theorem}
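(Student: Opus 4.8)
The plan is to pass to the ultrametric realizing the distortion and work with the hierarchical partition it induces on $S$. Fix an ultrametric $\rho$ on $S$ with $\rho(x,y)\le\|x-y\|\le D\rho(x,y)$. For each scale $\sigma>0$ the relation $\rho(x,y)\le\sigma$ is an equivalence relation, so $S$ is partitioned into clusters $\mathcal P(\sigma)$; these partitions refine as $\sigma$ decreases and together form a tree. The two defining inequalities translate into two properties I will use repeatedly: every cluster $Q\in\mathcal P(\sigma)$ has $\diam Q\le D\sigma$ (from $\|\cdot\|\le D\rho$), while distinct clusters of $\mathcal P(\sigma)$ are at $\|\cdot\|$-distance $>\sigma$ from one another (from $\rho\le\|\cdot\|$). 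To bound the dimension I fix $s<\dim_H S$ and, by Frostman's lemma, a probability measure $\mu$ supported on a compact subset of $S$ with $\mu(B(x,t))\le t^{s}$ for all $x,t$; it then suffices to prove $s\le\bigl(1-\tfrac{1}{2(D+1)}\bigr)n$ and let $s\nearrow\dim_H S$.

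The core is a volume-packing estimate carried out inside a single cluster and then summed across scales. Fix $Q\in\mathcal P(\sigma)$ and look at its children $Q_1,\dots,Q_b$ at a smaller scale $\sigma'$: they are pairwise $>\sigma$-separated, so their $\sigma/2$-neighborhoods are pairwise disjoint, each contains a ball of radius $\sigma/2$, and all of them lie in a ball of radius $(D+\tfrac12)\sigma$ (since $\diam Q\le D\sigma$). Comparing volumes bounds the branching, and pairing this with the Frostman estimate $\mu(Q)\le(D\sigma)^{s}$ on cluster masses gives a relation between the mass and volume budget at scale $\sigma$ and that at scale $\sigma'$. Summing over all clusters of a level and telescoping along a geometric sequence of scales produces a single inequality whose exponent is governed by the ratio $(\sigma/2)/\bigl((D+\tfrac12)\sigma\bigr)$; optimizing the scale ratio is what should turn the packing constant $2(D+1)$ into the dimension deficit $\tfrac{1}{2(D+1)}$.

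I expect the main obstacle to be that a bare single-scale volume count yields only the trivial bound $\dim_H S\le n$: the diameter inequality $\diam Q\le D\sigma$ does not by itself force $Q$ to occupy much volume, since a cluster may be large in diameter yet carry negligible volume and mass --- for instance two tightly clustered groups of points lying far apart. Such ``thin'' clusters are exactly the configurations that defeat a naive packing argument, and the real content of the proof is to tame them by charging a thin cluster against the genuine $\|\cdot\|$-separation that its descendants must exhibit at finer scales. Organizing this multi-scale bookkeeping so that the per-scale gain is uniform, and verifying that the gain is not washed out when the telescoped inequality is unrolled, is where the quantitative strength of the statement --- and the precise constant $\tfrac{1}{2(D+1)}$ --- must come from.
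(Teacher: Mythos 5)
Your proposal does not reach a proof: the part you defer --- ``taming thin clusters by charging them against the separation of their descendants'' --- is not a technical detail but the entire content of the theorem, and the one quantitative step you do write down is incorrect. Children $Q_1,\dots,Q_b\in\mathcal{P}(\sigma')$ of a cluster $Q\in\mathcal{P}(\sigma)$ are \emph{not} pairwise $>\sigma$-separated in the norm: two points lying in distinct children are only guaranteed to have $\rho$-distance $>\sigma'$, hence norm distance $>\sigma'$ (they lie in a common parent, so their $\rho$-distance may be barely above $\sigma'$). Indeed, if your separation claim were true, the branching would be at most $(2D+1)^n$ \emph{independently of} $\sigma'$, and letting $\sigma'/\sigma\to 0$ would force $\dim_H(S)=0$ for every $S$ with $c_{\mathrm{UM}}(S)\le D$, contradicting Proposition~\ref{prop:low-dist} and the tightness of Theorem~\ref{thm:hausdorff-dvo}. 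With the correct separation $>\sigma'$, your packing count gives $b\le\bigl(1+2D\sigma/\sigma'\bigr)^n$, and telescoping this along any geometric sequence of scales yields, exactly as you anticipated, only the trivial bound $\dim_H(S)\le n$. So as it stands the argument contains no mechanism that produces a dimension deficit, and the Frostman measure does not supply one. (A lesser issue: Frostman's lemma needs Borel or analytic sets, whereas the theorem concerns arbitrary $S$; the paper sidesteps this by bounding Hausdorff content through explicit covers.)

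The paper's proof shows what the missing mechanism must be, and it is geometric rather than combinatorial. First, instead of the balls of the given ultrametric $\rho$ --- whose diameter bound $D\sigma$ and child separation $\sigma'$ live at mismatched scales, which is precisely what kills your count --- the paper uses the path-components of the Minkowski neighborhood $S+B^o(r)$: a chain argument (Claims~\ref{cl:approx-um} and~\ref{cl:diam-bound}) shows each such component has diameter at most $2(D+1)r$, so separation (the fattening radius $r$) and diameter are comparable at the \emph{same} scale. Second --- and this is the step no packing argument can replace --- the Brunn--Minkowski inequality converts that diameter bound into volume decay: writing a component as $C=\bigl((S\cap C)+B^o(r)\bigr)+B^o((e^\delta-1)r)$ gives $\vol{(S\cap C)+B^o(r)}^{1/n}\le \bigl(1-\tfrac{e^\delta-1}{2(D+1)e^\delta}\bigr)\vol{C}^{1/n}$, and summing over components shows that $\vol{S+B^o(r)}$ shrinks by the factor $\bigl(1-\tfrac{e^\delta-1}{2(D+1)e^\delta}\bigr)^n$ each time $r$ decreases by $e^{-\delta}$, as in~\eqref{eq:vol-ineq}. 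The covers are then the components themselves: their number is at most $\vol{S+B^o(e^{-\delta j})}/\vol{B^o(e^{-\delta j})}$, their diameters are at most $2(D+1)e^{-\delta j}$, and optimizing over $\delta\to 0$ gives $\dim_H(S)\le\bigl(1-\tfrac{1}{2(D+1)}\bigr)n$. The moral is that the deficit $\tfrac{1}{2(D+1)}$ comes from volume rigidity --- fattening a set of small diameter must increase its volume by a definite multiplicative factor --- not from counting; any ``charge'' you place on a thin cluster would have to encode such an isoperimetric estimate, at which point you will have essentially reconstructed the paper's argument.
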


An ultrametric space $(X,\rho)$  is a metric space satisfying $\rho(x,y)\le \max\{\rho(x,z),\rho(y,z)\}$ for all $x,y,z\in X$.
The {\em ultrametric distortion of a metric space $(X,d)$}, written $c_{\mathrm{UM}}(X,d)$, is the infimum over $D$ such that
there exists an ultrametric $\rho$ on $X$  satisfying $d(x,y)\le \rho(x,y)\le D\cdot d(x,y)$ for all $x,y\in X$.
The Euclidean distortion $c_2(X,d)$ of $(X,d)$ is defined similarly with respect to Hilbertian metrics over $X$.
The diameter of a metric space $(X,d)$ is given by $\diam(X)=\sup_{x,y\in X} d(x,y)$.
The $\alpha$-Hausdorff content of a metric space $(X,d)$ is defined as
\( \mathcal C^\alpha(X) = \inf \bigl\{ \sum_{i\in\mathbb N} \diam(A_i)^\alpha: \; \bigcup_{i\in\mathbb N} A_i \supseteq X \bigr\},\)
and the Hausdorff dimension of $X$ is $\dim_H(X)=\inf\{ \alpha>0:\ \mathcal{C}^\alpha(X)=0\}$.

Theorem~\ref{thm:Rd-subsets} proves that the Euclidean spaces $\mathbb R^n$ form (asymptotically)
tight examples
to the following Dvoretzky-type theorem for Hausdorff dimension from~\cite{MN11-ultra}.

\begin{theorem}[\cite{MN11-ultra}]
\label{thm:hausdorff-dvo}
For every $\e\in(0,1)$, every locally compact metric
space $(X,d)$ contains a subset $S\subseteq X$ having ultrametric distortion
at most $9/\e$, while having Hausdorff dimension at least $(1~-~\e)\dim_H(X)$.
\end{theorem}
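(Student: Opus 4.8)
The plan is to deduce the statement from a measured ``ultrametric skeleton'' lemma together with the mass distribution principle. First I would reduce to the case of a compact space carrying a Frostman measure: fix $\beta'<\dim_H(X)$; since $\mathcal C^{\beta'}(X)>0$, local compactness lets me pass to a compact subset $X'$ and (by Frostman's lemma) produce a Borel probability measure $\mu$ on $X'$ with $\mu(B(x,r))\le r^{\beta'}$ for all $x,r$ (after rescaling the metric). Granting this, it suffices to construct a closed $S\subseteq X'$ with $c_{\mathrm{UM}}(S)\le 9/\e$ and a probability measure $\nu$ supported on $S$ obeying a transferred Frostman bound $\nu(B(x,r))\le C(\e)\,\mu\bigl(B(x,C(\e)r)\bigr)^{1-\e}\le C'(\e)\,r^{(1-\e)\beta'}$. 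The mass distribution principle then yields $\dim_H(S)\ge (1-\e)\beta'$, and letting $\beta'\uparrow\dim_H(X)$ (selecting a suitable skeleton in the limit) gives the theorem.

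The skeleton is built by a top-down ``zoom or branch'' recursion that simultaneously produces a hierarchically separated tree (hence the ultrametric) and the measure $\nu$. I maintain a family of nested clusters with geometrically controlled diameters; the ultrametric $\rho(x,y)$ is the diameter-scale of the smallest cluster containing both points, so that a multiplicative gap of $s$ between the diameter of a parent cluster and the pairwise separation of its children certifies $c_{\mathrm{UM}}\le s$. Alongside, I assign conditional weights: if a cluster $C$ of $\mu$-mass $m=\mu(C)$ is split into children $C_1,\dots,C_N$, I set $\nu(C_i\mid C)=m_i^{1-\e}\big/\sum_j m_j^{1-\e}$ with $m_i=\mu(C_i)$. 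Telescoping from the root gives $\nu(C)\le\mu(C)^{1-\e}$ provided the invariant $\sum_j m_j^{1-\e}\ge m^{1-\e}$ survives every split; since a cluster of scale $\approx r$ meets $B(x,r)$ together with only $O_\e(1)$ neighboring clusters, this is exactly the transferred Frostman bound needed above.

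The heart is a single-scale lemma realizing each split with gap $s=O(1/\e)$ while preserving the invariant. Working in a cluster of radius $\approx\rho$, I choose a separating radius by averaging over $\approx 1/\e$ thin annular shells of multiplicative width $1+\Theta(\e)$ tiling a window $[\rho,2\rho]$: at least one shell carries $\mu$-mass at most an $\e$-fraction of the ambient ball, and using it as an empty buffer separates an inner ball from the rest with separation $\Theta(\e)\rho$ against diameter $O(\rho)$, whence $s=O(1/\e)$ (optimized to $9/\e$). The discarded shell mass is an $\e$-fraction, and superadditivity of $t\mapsto t^{1-\e}$ then keeps $\sum_j m_j^{1-\e}\ge m^{1-\e}$: a balanced split gains a factor $N^{\e}$ that absorbs the loss, while for an unbalanced split I instead \emph{zoom} into the inner ball without branching, which preserves the invariant trivially. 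Crucially, $\mu$ is non-atomic (as $\mu(\{x\})\le \lim_{r\to0}r^{\beta'}=0$), so mass cannot concentrate at a point under repeated zooming; this forces genuine branching to recur and is what makes $\dim_H(S)$ positive, indeed $\ge(1-\e)\beta'$.

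The main obstacle I anticipate is twofold. Quantitatively, one must run the ``zoom or branch'' dichotomy so that the invariant $\sum_j m_j^{1-\e}\ge m^{1-\e}$ holds at \emph{every} node across infinitely many scales with the \emph{same} exponent $1-\e$ and a \emph{fixed} gap $9/\e$; the delicate point is balancing the shell-discard fraction against the superadditivity gain $N^\e$ so that neither the distortion nor the exponent drifts as the recursion deepens. Structurally, the construction a priori yields a measure and clusters only along the recursion tree, so a compactness and Carath\'eodory-extension argument is needed to realize $S$ as a genuine closed subset of $X'$ carrying an honest Borel measure $\nu$, and a further limiting argument (over $\beta'\uparrow\dim_H(X)$, using that Hausdorff dimension is a supremum) is needed to attain the stated bound $(1-\e)\dim_H(X)$ with a single subset.
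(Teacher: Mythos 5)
This paper does not prove Theorem~\ref{thm:hausdorff-dvo} at all --- it is quoted from~\cite{MN11-ultra} --- so the relevant benchmark is the ultrametric-skeleton proof in that source, whose outline your plan does reproduce faithfully: reduce via Frostman's lemma to a compact space with a measure satisfying $\mu(B(x,r))\le r^{\beta'}$, build a hierarchically separated subset $S$ carrying a measure $\nu$ with the transferred bound $\nu(B(x,r))\le \mu(B(x,Cr/\e))^{1-\e}$, and finish with the mass distribution principle. That architecture is correct. But your sketch has a genuine gap exactly at its declared heart, and the gap is not merely one of detail: the ``zoom or branch'' step, as you describe it, \emph{fails} to preserve the invariant $\sum_j m_j^{1-\e}\ge m^{1-\e}$. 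After discarding a light shell you have $\sum_j m_j = (1-\Theta(\e))m$, and superadditivity of $t\mapsto t^{1-\e}$ gives only $\sum_j m_j^{1-\e}\ge ((1-\Theta(\e))m)^{1-\e}$, a multiplicative loss of $(1-\Theta(\e))^{1-\e}$ per level that compounds to $0$ over infinitely many scales. The compensating gain $N^{\e}$ exists only for splits that are roughly balanced, and nothing in the construction forces balance; meanwhile the ``zoom'' alternative also discards shell mass, so it does not preserve the invariant ``trivially'' --- if $\nu$ of the cluster is retained while its $\mu$-mass shrinks, the telescoped bound $\nu(C)\le\mu(C)^{1-\e}$ can break. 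Managing precisely this drift, uniformly over all nodes with a fixed exponent $1-\e$ and fixed gap $9/\e$, is the substance of the Mendel--Naor argument (a weighted stopping-time induction occupying most of their paper); your proposal names the obstacle but does not supply the idea that overcomes it.

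A secondary but real flaw: you justify the transferred Frostman bound by asserting that a ball $B(x,r)$ meets only $O_\e(1)$ clusters of scale $\approx r$. In a general (locally) compact metric space there is no doubling, so this count can be unbounded. The correct route is the opposite observation: the hierarchical separation guarantees that at the level where inter-cluster separation exceeds $2r$, the set $B(x,r)\cap S$ lies in a \emph{single} cluster, whose diameter is $O(r/\e)$ and which is therefore contained in $B(x,O(r/\e))$ --- this is exactly where the inflation factor $9/\e$ inside $\mu(B(x,9r/\e))^{1-\e}$ comes from, and it replaces, rather than supplements, your neighbor-counting step.
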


Since separable ultrametrics embed isometrically in Hilbert space~\cite{TV},
Theorem~\ref{thm:hausdorff-dvo} is also true if one replaces ``ultrametric distortion"
with ``Euclidean distortion.''
Of course, for this (weaker) Euclidean version of Theorem~\ref{thm:hausdorff-dvo},
Euclidean spaces cannot serve as tight examples.
Tight examples for the Euclidean version of Theorem~\ref{thm:hausdorff-dvo}
are constructed in~\cite{MN11-ultra};
those spaces are stronger than $\mathbb R^n$ in the current context, but being ``fractals" based on expander graphs, they are also
more exotic.

\medskip

Previously,
Luosto~\cite{Luosto} proved a qualitative result along the lines of Theorem~\ref{thm:Rd-subsets}: Any subset  $S\subseteq \mathbb R^n$ of the $n$-dimensional Euclidean space which has finite ultrametric distortion  must have
$\dim_A(S)<n$, where $\dim_A(S)$ is the Assouad dimension of $S$ (note that
$\dim_A(X)\ge \dim_H(X)$ for every metric space $X$).
Luosto's proof gives only a weak quantitative bound on the Assouad dimension, namely,
$\dim_A(S)\le \bigl(1-\tfrac{c}{(2Dn)^n}\bigr) n$, for some universal constant $c>0$.
The proof of Theorem~\ref{thm:Rd-subsets} presented here is sufficiently flexible to derive a stronger version of
Theorem~\ref{thm:Rd-subsets}, with Assouad dimension replacing the Hausdorff dimension; see Remark \ref{rem:assouad}.
This variant of Theorem~\ref{thm:Rd-subsets} is an asymptotically tight quantitative version of Luosto's theorem.

It is not clear whether the constant $(1-1/(2(D+1))$ in
Theorem~\ref{thm:Rd-subsets}
is close to optimal when $D$ is large. 
However,  it is clear that Theorem~\ref{thm:Rd-subsets}
does not give  meaningful estimates when $D>1$ is small.
Luosto~\cite{Luosto} observed that the Hausdorff dimension of subsets $S\subset \mathbb R ^n$ must approach 0 as their ultrametric distortion approaches 1, i.e.,
for every $\delta >0$ there exists $\e>0$ such that if $c_{\text{UM}}(S)<1+\e$, then $\dim_H(S)<\delta$.
On the other hand, we have the following propostion.
\begin{proposition} \label{prop:low-dist}
For every $\e\in[0,1/4]$ and $n\in\mathbb{N}$, there exists $S\subset \mathbb R^n$ for which
$c_{\text{UM}}(S)\le 1+3\e$ and $\dim_H(S)\ge \frac{c\e^2}{\log(1/\e)}n$, for some universal $c>0$.
\end{proposition}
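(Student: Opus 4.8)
The plan is to realize $S$ as the attractor of a self-similar iterated function system whose branch points form a \emph{nearly} regular simplex, tuning the number of branches and the contraction ratio so that low distortion and positive dimension coexist. The first and governing step is to produce inside $\mathbb R^n$ a large family of almost-equidistant points $v_1,\dots,v_m$, namely $\|v_i-v_j\|\in[1,1+\delta]$ for all $i\ne j$, with $\delta=\Theta(\e)$. Taking the $v_i$ to be independent uniform unit vectors and using the Gaussian-type concentration of the inner products $\langle v_i,v_j\rangle$ about $0$ at scale $n^{-1/2}$, a union bound over the $\binom{m}{2}$ pairs lets me take $m=\lfloor e^{c_1\delta^2 n}\rfloor$ for a universal $c_1>0$ (rescaling so that the minimal distance is $1$). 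When $\delta^2 n$ is too small for this to exceed $2$, I simply fall back on $m=2$, which still satisfies $\log m\ge c_1'\delta^2 n$; in either regime $\log m=\Theta(\e^2 n)$.

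Next I let $S\subset\mathbb R^n$ be the attractor of the maps $f_i(x)=s\,x+v_i$, $i=1,\dots,m$, with contraction ratio $s=1/R$ and $R=\Theta(1/\e)$, so that $S=\{\sum_{k\ge0}s^k v_{a_k}:a_k\in\{1,\dots,m\}\}$. I equip $S$ with the candidate ultrametric $\rho(x,y)=A\,s^{k(x,y)}$, where $k(x,y)$ is the first coordinate at which the codings of $x$ and $y$ differ and $A$ is a normalizing constant. For $x=\sum s^k v_{a_k}$ and $y=\sum s^k v_{b_k}$ first differing at index $k$, I write $x-y=s^k(v_{a_k}-v_{b_k})+\sum_{j>k}s^j(v_{a_j}-v_{b_j})$ and bound the tail geometrically by $2s^{k+1}/(1-s)$, obtaining $\|x-y\|\in s^k\cdot[\,1-\tfrac{2s}{1-s},\,1+\delta+\tfrac{2s}{1-s}\,]$. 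Hence, with $A=1-\tfrac{2s}{1-s}$, we get $d\le\rho\le D\,d$ for $D=\frac{1+\delta+2s/(1-s)}{1-2s/(1-s)}$; choosing $\delta$ and $s$ to be suitable small constant multiples of $\e$ (so that $\delta$ and $s/(1-s)$ each contribute $O(\e)$) yields $D\le 1+3\e$ for all $\e\in(0,1/4]$, i.e. $c_{\mathrm{UM}}(S)\le 1+3\e$.

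For the lower bound on dimension I observe that each level-$k$ cluster has diameter $\le 2s^{k}$ while distinct level-$k$ clusters are separated by $\ge A\,s^{k-1}$; taking $s$ small makes the pieces $f_i(S)$ pairwise disjoint and the open set condition hold. I then place the uniform measure giving mass $1/m$ to each branch and apply the mass distribution principle: a ball of radius $r\in[s^{k},s^{k-1})$ meets only $O(1)$ level-$k$ clusters, so its measure is $\lesssim m^{-k}=s^{k\alpha}\le r^{\alpha}$ with $\alpha=\log m/\log(1/s)$. This gives $\dim_H(S)\ge \log m/\log(1/s)$, and inserting $\log m=\Theta(\e^2 n)$ together with $\log(1/s)=\log R=\Theta(\log(1/\e))$ produces $\dim_H(S)\ge \frac{c\e^2}{\log(1/\e)}\,n$ for a universal $c>0$ (the case $\e=0$ being trivial).

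The genuine content lies in the joint calibration of Steps~1--2: the branch count $m$ must be exponentially large in $n$, which forces the equidistance slack $\delta$, and therefore the distortion, to be $\Omega(\e)$; simultaneously the contraction ratio $R=1/s$ must stay only \emph{polynomially} large in $1/\e$, since $\log R$ occupies the denominator of the dimension. The crux is thus to verify in the distortion computation of Step~2 that the tail/separation errors are dominated by $\delta$ and do not themselves demand a large $R$ — this is what pins $R=\Theta(1/\e)$ rather than something super-polynomial. Everything else, the concentration estimate for the near-equidistant points and the mass distribution bound, is standard.
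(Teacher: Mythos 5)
Your construction is essentially the paper's: the paper's set $S=\bigl\{\sum_{i}(1-\e)\e^i x_i : x_i\in C\bigr\}$ is exactly the attractor of an IFS with contraction ratio $s=\e$ whose branch points are the codewords of a binary code $C\subset\{0,1\}^n$ of size $2^{c\e^2 n}$ with nearly equal pairwise Hamming (hence Euclidean) distances, so your random near-equidistant vectors play the role of $C$, and your coding ultrametric and mass-distribution bound $\dim_H(S)\ge \log m/\log(1/s)$ are the same calibration the paper's sketch (via~[BBM, Lemma~8]) relies on. The only slip is harmless: with $A=1-\tfrac{2s}{1-s}$ you get $\rho\le d\le D\rho$ rather than $d\le\rho\le D\,d$, which is fixed by rescaling $\rho$ and does not change the distortion bound.
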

\begin{proof}[Sketch of a proof]
The argument is similar to~\cite[Lemma~8]{BBM}.
Take a binary code in $C\subset \{0,1\}^n$
of size $2^{c \e^2n}$  in which all pairwise Hamming distances are at the range
$\bigl[\frac{(1-\e)n}{2}, \frac{(1+\e)n}{2} \bigr]$. The set $S\subset \mathbb [0,1]^n$ is defined as
\(
S=\bigl \{ \sum_{i=0}^\infty (1-\e) \e^i x_i \; :\; x_i\in C\bigr\}. 
\)
\end{proof}

This property of $\mathbb R^n$ is qualitatively different from general metric spaces,
where there is an example of a compact metric space $X$
for which $\dim_H(X)=\infty$,
but for every subset $S\subset X$, if $c_{\text{UM}}(S)<2$, then $\dim_H(S)=0$; see~\cite{MN11-ultra,Fun11}.

\section*{Proof of Theorem~\ref{thm:Rd-subsets}}

Fix $D>1$, $n\in\mathbb N$ and a norm $\|\cdot \|$ on $\mathbb R^n$.
Denote by $B^o(r)=\{x\in \mathbb R^n: \|x\|<r\}$ the open ball of radius $r$ around the origin.
For subsets $A,B\subset \mathbb R^n$ we denote the Minkowski sum of $A$ and $B$ by $A+B=\{a+b:\, a\in A, \, b\in B\}$,
and for measurable sets $A$, we use $\vol{A}$ for the $n$-dimensional Lebesgue measure of $A$.

\begin{claim} \label{cl:approx-um}
Let $(X,d)$ a metric space that embeds in an ultrametric with distortion at most $D$, and let $x_0,\ldots, x_m\in X$.
Then
$\max_i d(x_i,x_{i-1})\ge d(x_0,x_m)/D$.
\end{claim}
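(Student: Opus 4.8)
The plan is to use the definition of ultrametric distortion directly, passing to the ultrametric $\rho$ and exploiting the strong triangle inequality. Since $(X,d)$ embeds in an ultrametric with distortion at most $D$, there exists an ultrametric $\rho$ on $X$ with $d(x,y) \le \rho(x,y) \le D\cdot d(x,y)$ for all $x,y\in X$. The heart of the matter is an elementary fact about ultrametrics: along any chain $x_0,\ldots,x_m$, the strong triangle inequality forces $\rho(x_0,x_m)$ to be bounded by the maximum of the consecutive gaps $\rho(x_i,x_{i-1})$, not merely by their sum.

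First I would establish this ultrametric chain inequality, namely $\rho(x_0,x_m)\le \max_{1\le i\le m} \rho(x_{i-1},x_i)$, by induction on $m$. The base case $m=1$ is trivial. For the inductive step, the strong triangle inequality gives
\[
\rho(x_0,x_m)\le \max\bigl\{\rho(x_0,x_{m-1}),\,\rho(x_{m-1},x_m)\bigr\},
\]
and applying the inductive hypothesis to bound $\rho(x_0,x_{m-1})$ by $\max_{1\le i\le m-1}\rho(x_{i-1},x_i)$ yields the claim.

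Next I would transfer this inequality back to the metric $d$ using the distortion bounds. On the left, the lower bound gives $d(x_0,x_m)\le \rho(x_0,x_m)$. On the right, the upper bound gives $\rho(x_{i-1},x_i)\le D\cdot d(x_{i-1},x_i)$ for each $i$, so that $\max_i \rho(x_{i-1},x_i)\le D\cdot \max_i d(x_{i-1},x_i)$. Chaining these together produces
\[
d(x_0,x_m)\le \rho(x_0,x_m)\le \max_{1\le i\le m}\rho(x_{i-1},x_i)\le D\cdot \max_{1\le i\le m} d(x_{i-1},x_i),
\]
which rearranges to the desired bound $\max_i d(x_i,x_{i-1})\ge d(x_0,x_m)/D$.

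I do not anticipate a serious obstacle here: the claim is a routine consequence of the definitions, and the only genuine ingredient is the chain version of the strong triangle inequality, whose inductive proof is entirely mechanical. The one point worth a moment of care is keeping the direction of the inequalities straight when passing between $d$ and $\rho$ — using the lower distortion bound on the endpoints and the upper bound on the links — but this is bookkeeping rather than a true difficulty.
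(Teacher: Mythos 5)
Your proof is correct and follows essentially the same route as the paper: pass to the ultrametric $\rho$, prove the chain inequality $\rho(x_0,x_m)\le\max_i\rho(x_{i-1},x_i)$ by induction on the strong triangle inequality, and then sandwich with the distortion bounds $d\le\rho\le D\cdot d$. The only cosmetic difference is that you peel off the last point $x_{m-1}$ in the induction while the paper peels off $x_1$, which changes nothing.
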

\begin{proof}
Let $\rho$ be an ultrametric on $X$ such that ${d}\le \rho \le D\cdot d$.
We claim that $\max_i\rho(x_i,x_{i-1}) \ge \rho(x_0,x_m)$. Indeed, by induction
\begin{multline*}
\rho(x_0,x_m)\le \max\{\rho(x_0,x_1), \rho(x_1,x_m)\} \le \max\{\rho(x_0,x_1), \rho(x_1,x_2),\rho(x_2,x_m)\} \le \ldots
\\
\le \max\{\rho(x_0,x_1), \rho(x_1,x_2),\ldots, \rho(x_{m-1},x_m)\} .
\end{multline*}
Hence,
\begin{equation*}
{d(x_0, x_m)} \le \rho(x_0,x_m)
\le \max_i \rho(x_{i-1},x_i)\le D\cdot \max_i d(x_{i-1},x_i) .  \qedhere
\end{equation*}
\end{proof}

 \begin{claim} \label{cl:diam-bound}
 Let $S\subset \mathbb R^n$ be a subset that embeds in an ultrametric with distortion $D$.
 If $C$ is a path-connected subset of $S+B^o(r)$, then
 $\diam(C)\le 2(D+1)r$
 \end{claim}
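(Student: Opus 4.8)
The plan is to fix two arbitrary points $c,c'\in C$ and bound $\|c-c'\|$ by $2(D+1)r$; taking the supremum then yields the desired bound on $\diam(C)$. The only hypotheses I will use are that $C$ is path-connected, that every point of $C$ lies within distance $r$ of $S$ (since $C\subset S+B^o(r)$), and that $S$ embeds in an ultrametric with distortion $D$, which makes Claim~\ref{cl:approx-um} available for the metric space $(S,\|\cdot\|)$.

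Since $C$ is path-connected, I would choose a continuous path $\gamma\colon[0,1]\to C$ with $\gamma(0)=c$ and $\gamma(1)=c'$. Because $\gamma$ is continuous on the compact interval $[0,1]$, it is uniformly continuous, so for any prescribed $\e>0$ I can pick a partition $0=t_0<t_1<\dots<t_m=1$ fine enough that $\|\gamma(t_i)-\gamma(t_{i-1})\|<\e$ for every $i$. For each index $i$, since $\gamma(t_i)\in S+B^o(r)$, I select a point $s_i\in S$ with $\|s_i-\gamma(t_i)\|<r$.

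The next step is to estimate the consecutive distances in the chain $s_0,\dots,s_m$. By the triangle inequality, $\|s_i-s_{i-1}\|\le \|s_i-\gamma(t_i)\|+\|\gamma(t_i)-\gamma(t_{i-1})\|+\|\gamma(t_{i-1})-s_{i-1}\|<2r+\e$. Now I apply Claim~\ref{cl:approx-um} to the points $s_0,\dots,s_m\in S$ in the contrapositive form $\|s_0-s_m\|\le D\cdot\max_i\|s_i-s_{i-1}\|$, which gives $\|s_0-s_m\|\le D(2r+\e)$. Feeding this back through the triangle inequality and using $\|c-s_0\|<r$ and $\|s_m-c'\|<r$, I obtain $\|c-c'\|<2(D+1)r+D\e$.

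The decisive and most delicate step is to let $\e\to0$: since the estimate $\|c-c'\|<2(D+1)r+D\e$ holds for every $\e>0$, it forces $\|c-c'\|\le 2(D+1)r$. I expect this limiting step to be where care is required, because the target bound $2(D+1)r$ is exactly the boundary value and the slack term $D\e$ must be driven to zero rather than merely made small; any attempt to argue by contradiction at a single fixed $\e$ would stall at the equality case. Taking the supremum over all $c,c'\in C$ finally yields $\diam(C)\le 2(D+1)r$.
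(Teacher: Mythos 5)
Your proof is correct and follows essentially the same route as the paper: discretize a path between two points of $C$, replace the sample points by nearby points of $S$, apply Claim~\ref{cl:approx-um} to that chain, and let the mesh parameter tend to zero. The only difference is cosmetic --- the paper frames the argument as a contradiction (assuming $\diam(C)>2(D+1)r$ and deriving $2r>2r$), whereas you bound $\|c-c'\|$ directly and take a supremum, which is arguably a cleaner write-up of the identical idea.
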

 \begin{proof}
 Suppose for the sake of contradiction that $\diam(C)>2(D+1)r$.
 Fix $\eta>0$,
 and let $a_0,a_1\in C$ such that $\|a_0-a_1\|> 2(D+1)r$.
 Since $C$ is path-connected,
 there exists a continuous path $a:[0,1]\to C$ such that $a(0)=a_0$ and $a(1)=a_1$. Define $b:[0,1]\to S$, where $b(t)\in S$ is a point in $S$
 such that $\|b(t)-a(t)\|\le r$.  From the continuity of $a$ there exists a sequence of points $0=t_0<t_1<\ldots <t_m=1$ such that $\|a(t_i)-a(t_{i-1})\|\le \eta$ for every $i\in \{1,\ldots,m\}$.
 Hence $\|b(t_0)-b(t_m)\|>  2(D+1)r-2r$, and for every $i\in \{1,\ldots, m\}$,
 $\|b(t_i)-b(t_{i-1})\|\le  2r+2\eta$.
 But from Claim~\ref{cl:approx-um},
 \[ 2 r+2\eta \ge \max_i \|b(t_i)-b(t_{i-1})\| \ge \frac{\|b(t_0)-b(t_m)\|}{D}\,.
 \]
 Since the above is true for any $\eta>0$, we conclude that
 \[ 2r \ge  \frac{\|b(t_0)-b(t_m)\|}{D} > 2r\,,\]
contradicting our initial assumption.
\end{proof}

\begin{proof}[Proof of Theorem~\ref{thm:Rd-subsets}]
Suppose that $c_{\text{UM}}(S)\le D$.
We may assume without loss of generality that $S\subseteq B^o(1)$
(since one can find a countable subset $\mathcal N\subset \mathbb R^n$ such that
$\bigcup_{x\in \mathcal{N}} ((x+B^o(1))\cap S)=S$, and for any countable collect of subsets
$\{A_x\}_{x\in\mathcal N}$ we have $\dim_H(\cup_{x\in \mathcal N} A_x)=\sup_{x\in\mathcal N} \dim_H(A_x)$ ).
Fix $\delta>0$, fix $r>0$,
and fix a path-component  $C\subset S+B^o(e^\delta r)$ of $S+B^o(e^\delta r)$.
Note that $C$ is an open subset.
 By Claim~\ref{cl:diam-bound}, $\diam(C)\le 2(D+1)e^\delta r$, and hence
$\vol{C}\le \vol{B^o(2(D+1)e^\delta r)}$, which means that
$\vol{B^o((e^\delta-1)r)}\ge \left(\frac{e^\delta -1}{2(D+1)e^\delta} \right)^{n} \vol{C}$.
Let $A=(S\cap C)+B^o(r)$.
 Observe that $C=A+B^o((e^\delta-1)r)$, and that $A$, $B^o((e^\delta-1)r)$, and $C$ are bounded and open.  By the Brunn-Minkowski inequality,
\begin{equation} \label{eq:component-vol-ineq}
\vol{A}^{1/n} \le \vol{C}^{1/n}- \vol{B^o((e^\delta -1)r)}^{1/n} \le \left (1-\frac{e^\delta -1}{2(D+1)e^\delta} \right) \vol{C}^{1/n}\,.
\end{equation}
Since the path-components of $S+B^o(e^\delta r)$ are open, and those components constitute a pairwise disjoint cover of $S+B^o(e^\delta r)$, by summing the $n$-th power of~\eqref{eq:component-vol-ineq}
over the path-components of $S+B^o(e^\delta r)$, we obtain
\begin{equation} \label{eq:vol-ineq}
\vol{S+B^o(r)} \le \left(1-\frac{e^\delta -1}{2(D+1)e^\delta}\right)^n \vol{S+B^o(e^\delta r)}.
\end{equation}

Fix $\alpha>\left(1+\delta^{-1} \log\left(1-\frac{e^{\delta}-1}{2(D+1)e^\delta}\right)\right) n$.
We will prove that $\mathcal C^\alpha(S)=0$ by constructing a
sequence of covers of $S$.
The $j$-th cover of $S$ is the set of path-components of $S+B^o(e^{-\delta j})$.
Let $\beta=\vol{B^o(1)}>0$. Note that $S+B^o(1)\subset B^o(2)$, and hence $\vol{S+B^o(e^{\delta 0})}\le 2^n \beta$.
By inductively applying~\eqref{eq:vol-ineq},
$\vol{S+B^o(e^{-\delta j})} \le
\left(1-\frac{e^\delta -1}{2(D+1)e^\delta}\right)^{jn} 2^n\beta$. On the other hand, each path-component
 of $S+B^o(e^{-\delta j})$ has a volume at least $\vol{B^o(e^{-\delta j})}=e^{-\delta jn} \beta$. Therefore, $S+B^o(e^{-\delta j})$ has at most
 $2^n \left(e^\delta \left (1-\frac{e^\delta -1}{2(D+1)e^\delta}\right)\right )^{jn}$ path-components, and by Claim~\ref{cl:diam-bound} each of the components has diameter at most
 $2(D+1)e^{-\delta j}$. Hence,
 \begin{multline*}
  \mathcal C^{\alpha}(S)\le 2^n \left(e^\delta \left (1-\frac{e^\delta -1}{2(D+1)e^\delta}\right)\right )^{jn} \cdot (2(D+1)e^{-\delta j})^{\alpha}
\\
\le 2^n (4D)^\alpha \cdot \left ( \frac{e^\delta \left (1-\frac{e^\delta -1}{2(D+1)e^\delta}\right)}{e^{\delta\alpha/n}}\right)^{jn} \xrightarrow[j\rightarrow \infty]{}0\,,
 \end{multline*}
 and therefore $\dim_H(S)\le \alpha$. Since the preceding bound is true for every $\delta > 0$ and every
 $\alpha>\left(1+\delta^{-1} \log\left(1-\frac{e^{\delta}-1}{2(D+1)e^\delta}\right)\right) n$, we conclude that
\[ \dim_H(S) \le \lim_{\delta\to 0^+} \left(1+\delta^{-1} \log\left(1-\frac{e^{\delta}-1}
{2(D+1)e^\delta}\right)\right) n
 =\left(1-\tfrac{1}{2(D+1)}\right)n\,.  \qedhere\]
\end{proof}

\begin{remark}\label{rem:assouad}
One may obtain the same conclusion with Assouad dimension replacing Hausdorff dimension.
This follows from
the fact that we have a uniform bound on the diameter of the elements in
our cover at every step; hence the same sequence of covers shows that
$\dim_A(S) \leq \left(1-\tfrac{1}{2(D+1)}\right)n$ as well.  We leave
verification as an exercise for the interested reader.
\end{remark}

\subsection*{Acknowledgments}
The authors wish to thank Uzy Hadad, Assaf Naor, and Yuval Peres for helpful and constructive discussions about Theorem~\ref{thm:Rd-subsets}.
M.~Me.  was partially supported by ISF grants 221/07, 93/11,
and BSF grants 2006009, 2010021.
This work was carried out while he was visiting  Microsoft Research
and University of Washington.
J.~L. and M.~Mo. were partially supported
by NSF grant CCF-0915251 and a Sloan Research Fellowship.

\bibliographystyle{abbrvurl}
\bibliography{hausdorff}

\end{document}